\newcommand{\noop}[1]{}
\def\mL{\mathsf{L}}
\def\cE{\mathcal E}
\def\cH{\mathcal H}
\def\cO{\mathcal O}
\def\cP{\mathcal P}
\def\cQ{\mathcal Q}
\def\PG{{\rm PG}}
\def\PSL{{\rm PSL}}
\def\d{\downarrow}
\newcommand{\newnumbered}[2]{\newtheorem{#1}[theorem]{#2}}
\newcommand{\newunnumbered}[2]{\newtheorem{#1}[theorem]{#2}}
\newcommand{\Title}[2]{\title{#1}\newcommand{\Acknowledgements}{\section*{Acknowledgements} #2}}
\newcommand{\Author}[2][]{\author{#2}}
\newcommand{\Comma}{\and}
\newcommand{\Und}{\and}
\newcommand{\br}{, }
\newcommand{\fs}{. }
\newcommand{\thanksone}[3][]{#1\thanks{#3\email{\tt #2}}}
\newcommand{\thankstwo}[3][]{#1\thanks{#3\email{\tt #2}}}
\newcommand{\thanksthree}[3][]{#1\thanks{#3\email{\tt #2}}}
\newcommand{\email}[1]{#1}
\newcommand{\classno}[2][2000]{}
\newtheorem{theorem}{Theorem}[section] 
\newtheorem{lemma}[theorem]{Lemma}     
\newtheorem{corollary}[theorem]{Corollary}
\newtheorem{claim}[theorem]{Claim}
\numberwithin{equation}{section}
\renewcommand{\labelenumi}{\theenumi}
\date{\today}
\begin{document}

\maketitle

\begin{abstract}
An $m$-ovoid of a finite polar space $\mathcal{P}$ is a set $\mathcal{O}$ of points such that every maximal subspace of $\mathcal{P}$ contains exactly $m$ points of $\mathcal{O}$. In the case when $\mathcal{P}$ is an elliptic quadric $\cQ^-(2r+1, q)$ of rank $r$ in $\mathbb{F}_q^{2r+2}$, we prove that an $m$-ovoid exists only if $m$ satisfies a certain modular equality, which depends on $q$ and $r$. This condition rules out many of the possible values of $m$. Previously, only a lower bound on $m$ was known, which we slightly improve as a byproduct of our method. We also obtain a characterization of the $m$-ovoids of $\cQ^{-}(7,q)$ for $q = 2$ and $(m, q) = (4, 3)$.
\end{abstract}

\section{Introduction}

Let $\Gamma$ be a finite connected regular graph on $v$ vertices and of valency $k$, $Y$ a proper subset of the vertex set of $\Gamma$. If $\theta^+$ and $\theta^-$ denote the second largest and least eigenvalues of $\Gamma$ respectively, then the number $N$ of ordered pairs of adjacent vertices of $Y$ satisfies (see \cite[Theorem 2.1]{Eisfeld}, \cite[Proposition 3.8]{DeBruynSuzuki})
\begin{equation}\label{eq:Y}
\theta^-|Y|+\frac{k-\theta^-}{v}|Y|^2\leq 
N\leq
\theta^+|Y|+\frac{k-\theta^+}{v}|Y|^2.
\end{equation}
The case of equality in Eq. \eqref{eq:Y} often gives rise to interesting combinatorial objects; 
in particular, when $\Gamma$ is related to 
incidence structures in finite geometry.

Let $\PG(n, q)$ denote the projective space of dimension $n$ with underlying vector space $V:=\mathbb{F}_q^{n+1}$ over the finite field $\mathbb{F}_q$ with $q$ elements. For a non-degenerate quadratic (or reflexive sesquilinear) form $f$ on $V$, the {\bf classical polar space} $\mathcal{P}$ associated with $f$ is the incidence structure formed by the totally singular (or totally isotropic, respectively) subspaces with respect to $f$; their incidence is defined by symmetrized containment \cite{pps,HT}. We consider the elements of $\mathcal{P}$ as subspaces of $\PG(n,q)$, so they are projective points, lines, etc. A maximal subspace of $\mathcal{P}$ has dimension $r-1$, where $r$ is the Witt index of $f$, also called the {\bf rank} of $\mathcal{P}$; such a subspace is called a {\bf generator}. (For further details, the reader is referred to Section \ref{sect:prelim}.)

A set $\cO$ of points of $\cP$ is an {\bf $m$-ovoid} if every generator of $\cP$ meets $\cO$ in exactly $m$ points. Equivalently, if $\Gamma$ is the collinearity graph of a finite polar space $\mathcal{P}$, then a set $Y$ that attains equality in the left-hand inequality of Eq. \eqref{eq:Y} is an $m$-ovoid of $\mathcal{P}$, for some natural $m$ (see \cite{BKLP, DeBruynSuzuki}). The notion of $m$-ovoids, which goes back to a classical work of B. Segre \cite{S}, was  introduced by J. Thas for generalized quadrangles in \cite{Thas}, and  extended to finite polar spaces of higher rank in \cite{ST}. A set of points of $\cP$ is called {\bf tight} \cite{Drudge} if it attains  equality in the right-hand side of Eq. \eqref{eq:Y}. 
In \cite{BKLP} a uniform algebraic framework for ovoids and tight sets was developed, and their connections with various geometric objects were explored.
\smallskip

The central problem concerning $m$-ovoids in a polar space $\cP$ is to determine the values of $m$, for which $\cP$ possesses an $m$-ovoid.
In this paper, we focus on the elliptic polar spaces (quadrics) $\cQ^-(2r+1, q)$ of rank $r$, which arise from a nondegenerate orthogonal form of Witt index $r$ in the $(2r+2)$-dimensional vector space $V$ over $\mathbb{F}_q$. An $m$-ovoid of $\cQ^{-}(2r+1,q)$ has $m(q^{r+1}+1)$ points, and 
it is said to be {\bf trivial} if it is empty ($m=0$) or consists of all points of the space ($m = \frac{q^r-1}{q-1}$). As the complement of an $m$-ovoid is an $\left(\frac{q^r-1}{q-1} - m\right)$-ovoid, one may assume $m\leq \frac{q^r-1}{2(q-1)}$.

In 1965 Segre \cite[p. 162]{S} proved that if an elliptic quadric $\cQ^-(5, q)$ of $\PG(5, q)$, $q$ odd, has an $m$-ovoid, then $m = \frac{q + 1}{2}$, and he called such a $\left(\frac{q+1}{2}\right)$-ovoid a {\bf hemisystem}. He also constructed a hemisystem of $\cQ^-(5, 3)$, admitting a group isomorphic to $\PSL(3, 4)$. In \cite{BruenHir}, by extending Segre's result, it was shown that $\cQ^-(5, q)$, $q$ even, possesses no non-trivial $m$-ovoids, see also \cite[Section 19.3]{H2}. Several constructions of hemisystems of $\cQ^-(5, q)$ have been presented in the literature in the last fifteen years \cite{BGR, BGR1, BLMX, CPa, CPe, KNS}. However, not much is known about $m$-ovoids of $\cQ^-(2r+1, q)$ for $r > 2$. It was shown in \cite[Theorem 13]{BKLP} that if an $m$-ovoid of $\cQ^-(2r+1, q)$ exists, then $m \ge (\sqrt{4 q^{r+1} + 9} - 3)/(2q-2)$ (see also  Remark \ref{rm:1} below).
\smallskip

Here we obtain the following (non-existence) result for $m$-ovoids of elliptic quadrics.
\begin{theorem}\label{th:1}
If $\cQ^-(2r+1,q)$ possesses an $m$-ovoid, then
\begin{equation}\label{eq:main}
F(m) \equiv 0 \pmod{q+1}, 
\end{equation}
where 
\[    F(m)=\begin{cases} 
    m^2 - m & \mbox{~if~}r\mbox{~is odd,} \\ 
    m^2 & \mbox{~if~}r\mbox{~is even and~}q\mbox{~is even,} \\ 
    m^2 + \frac{q+1}{2} m & \mbox{~if~}r\mbox{~is even and~}q\mbox{~is odd.}  
    \end{cases}
\]
\end{theorem}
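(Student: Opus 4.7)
The plan is to derive the congruence via a double-counting identity whose right-hand side is manifestly divisible by $q+1$ (this divisibility will come from each singular line of $\cQ^-(2r+1,q)$ having exactly $q+1$ points), while whose left-hand side reduces modulo $q+1$ to a scalar multiple of $F(m)$.

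The first step is to establish, for every totally singular subspace $\tau$ of $\cQ^-(2r+1,q)$ of vector dimension $k$ (with $0 \le k \le r$) and with $x := |\cO \cap \tau|$, the key linear identity
\[
|\cO \cap \tau^\perp| \;=\; (q^{r-k+1}+1)\,m \;-\; q^{r-k+1}\,x.
\]
This follows by double-counting pairs $(P,\Pi)$ with $\Pi$ a generator through $\tau$ and $P \in \cO \cap \Pi$: the $m$-ovoid property gives $\sum_{\Pi \supset \tau}|\cO \cap \Pi| = m \cdot \#\{\text{generators through }\tau\}$, while the fact that $\tau^\perp/\tau$ is itself an elliptic polar space of rank $r-k$ forces every point of $\tau^\perp \cap \cQ^- \setminus \tau$ to lie on the same number of such generators (namely $\prod_{i=1}^{r-k-1}(q^{i+1}+1)$), so that the ratio is exactly $q^{r-k+1}+1$. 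I will apply this for $k=1$ (a point) and $k=2$ (a singular line $\ell$, yielding $|\cO \cap \ell^\perp|=(q^{r-1}+1)m-q^{r-1}a_\ell$ with $a_\ell:=|\cO\cap\ell|$) repeatedly.

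The second step is to sum a carefully chosen quadratic expression in the intersection numbers over all singular lines (or, more generally, over a well-chosen family of totally singular subspaces). Using Step~1 this sum reduces to an explicit polynomial $\Psi(m,q,r)$ in $m$, while the summand is designed so that the total over all lines is a multiple of $q+1$ --- exploiting, for example, that each line carries $q+1$ points and that expressions such as $\sum_\ell a_\ell(q+1-a_\ell)$ acquire a natural factor of $q+1$. Reducing $\Psi(m,q,r)$ modulo $q+1$ via the elementary congruences $q\equiv -1$ and $q^s+1\equiv 1+(-1)^s \pmod{q+1}$ then identifies $\Psi\pmod{q+1}$ with an invertible multiple of $F(m)$, completing the argument. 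The same non-negativity of the right-hand side should, as the paper notes, produce the slight improvement of the known lower bound.

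The main obstacle will be the precise choice of summand in Step~2, which must be calibrated to the parities of $r$ and $q$. For $r$ odd the factor $q^r+1$ is the one that vanishes modulo $q+1$, producing $F(m)=m(m-1)$; for $r$ even it is $q^{r+1}+1$ that vanishes, giving $F(m)=m^2$ up to a correction. When $q$ is odd the half-integer $(q+1)/2$ is available and can enter as a weight --- plausibly through a signed count tied to the quadratic character of $\mathbb{F}_q^\times$, or through a pairing of the two orbits of non-isotropic points of $\PG(2r+1,q)$ under the orthogonal group --- and this produces the extra term $\tfrac{q+1}{2}\,m$ in the $r$-even, $q$-odd case; when $q$ is even no such half-weight exists and the congruence collapses to $m^2 \equiv 0 \pmod{q+1}$. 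Verifying that the chosen counting delivers exactly $F(m)$ modulo $q+1$ in all three parity cases (and not some weaker divisor) is the technical heart of the proof.
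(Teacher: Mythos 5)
Your Step 1 is correct and corresponds to the paper's Lemma \ref{basic}(c) (which is proved there for arbitrary subspaces, not only totally singular ones), but Step 2 --- the actual source of the congruence --- is not carried out, and the mechanism you sketch for it would not work. A sum of a quadratic expression in the intersection numbers $a_\ell$ over \emph{all} singular lines carries no congruence information: both $\sum_\ell a_\ell$ and $\sum_\ell a_\ell(a_\ell-1)$ (the number of ordered pairs of distinct collinear points of $\cO$) are completely determined by $m,q,r$ via the two-intersection property of tangent hyperplanes, so $\sum_\ell a_\ell^2$ is a known polynomial in $m,q,r$ and equating it with itself yields nothing. Nor does $\sum_\ell a_\ell(q+1-a_\ell)$ ``acquire a natural factor of $q+1$'': an individual term $a_\ell(q+1-a_\ell)$ is not divisible by $q+1$ unless $a_\ell\in\{0,q+1\}$, and the global sum is again a predetermined number with no a priori divisibility. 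The guess that the term $\tfrac{q+1}{2}m$ comes from the quadratic character or the two orbits of non-isotropic points is also off the mark; in the paper it arises purely from halving a congruence that naturally lives modulo $2(q+1)$.

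The missing idea is a rank-descent induction. The paper fixes a point $P_0$, passes to the quotient polar space $\cQ_{r-1}$ in $P_0^\perp/P_0$, and shows that the induced function $\ell\mapsto|\ell\cap\cO|-\chi(P_0)$ is a \emph{weighted} ovoid there (Lemma \ref{lemma-weight}); Lemma \ref{lm:mu} relates the squared norms of such functions at consecutive ranks, and iterating down to rank $1$ --- where the squared norm is trivially $m^2(q^2+1)$ --- yields Corollary \ref{cor:lines_modular}, a congruence modulo $2(q+1)$ for $\|\chi^{\downarrow}_{P_0}\|^2$. Theorem \ref{th:1} then follows by comparing that congruence with the \emph{exact} value of $\|\chi^{\downarrow}_{P_0}\|^2$ obtained from $\chi(P)^2=\chi(P)$ (Lemma \ref{lines}). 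Your outline contains neither the weighted-ovoid notion needed to descend through quotients (the induced functions are no longer characteristic functions) nor the induction that is the actual source of the modular constraint, so the ``technical heart'' you defer is in fact the entire proof.
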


An analysis of Eq. \eqref{eq:main} 
in Section \ref{sect:eq} shows that the admissible  values of $m$ for an $m$-ovoid of $\cQ^-(2r+1, q)$ are asymptotically rare.

Note that certain properties of $m$-ovoids of elliptic quadrics mirror those of tight sets of hyperbolic quadrics $\cQ^{+}(2r+1,q)$; in fact, a result of a similar spirit as Theorem \ref{th:1} was shown for tight sets of hyperbolic quadrics \cite{GMo, GMe, G}. Tight sets, the counterpart of $m$-ovoids, have been studied intensely in recent years (perhaps, with the main focus on those in the hyperbolic quadric $\cQ^{+}(5,q)$, also known as the {\em Cameron-Liebler line classes} in $\PG(3,q)$, 
see \cite{Feng} and references therein).

The proof of Theorem \ref{th:1} occupies Sections \ref{sect:prelim} and \ref{sect:eq}, 
and it is based on the following approach.
Fix a maximal flag $P_0\subset \ell_0\subset \pi_0 \subset 
\ldots \subset \Pi_0$ in $\cQ^-(2r+1, q)$ and define a sequence of quotient polar spaces: $\cQ_{r - 1}$ in $P_0^{\perp}/P_0$, $\cQ_{r-2}$ in $\ell_0^{\perp}/\ell_0$, etc., induced by $\cQ_r:=\cQ^-(2r+1, q)$. {Suppose that  $\mathcal{O}$ is an $m$-ovoid of $\cQ_r$, and 
$\mu_0\colon \cQ_r \to \mathbb{Z}$ is 
the characteristic function of $\cO$. 
In Section \ref{sect:prelim}, we show that $\mu_0$ induces a function $\mu_1\colon \cQ_{r-1} \to \mathbb{Z}$, called a {\em weighted ovoid} of $\cQ_{r-1}$, 
which in some sense generalizes the notion of an $m$-ovoid. Furthermore, such a function $\mu_i$ induces a weighted ovoid $\mu_{i+1}\colon \cQ_{r - i - 1} \to \mathbb{Z}$, for every $i=1,\ldots,r-2$. 
Put $\|\mu_i\|^2:=\sum_{P\in \cQ_{r-i}}\big(\mu_i(P)\big)^2$, and note that $\|\mu_0\|^2$ is simply equal to $|\mathcal{O}|=m(q^{r+1}+1)$. We prove that, for $i=0,\ldots,r-2$, $\|\mu_i\|^2$ can be expressed via $\|\mu_{i+1}\|^2$. Arguing by induction on $i$ for $i=1,\ldots,r-2$ implies that $\|\mu_1\|^2\equiv G(q,r,m) \pmod{q+1}$, where $G$ is a certain function. On the other hand, as $\|\mu_0\|^2=m(q^{r+1}+1)$, 
applying the induction step with $i=0$ shows that $\|\mu_1\|^2$ is another function in $q,r,m$, say $\|\mu_1\|^2=E(q,r,m)$. 
Thus, $E(q,r,m)\equiv G(q,r,m)\pmod{q+1}$ should have an integer solution in $m$, which, as shown in Section \ref{sect:eq}, gives the conclusion of Theorem \ref{th:1}.}

Finally, in Section \ref{sect:ineq}, by using the technique developed in Section \ref{sect:prelim}, we  slightly improve the above-mentioned lower bound for $m$ from \cite[Theorem 13]{BKLP}. We also provide a complete classification of the $m$-ovoids of $\cQ^-(7, 2)$ and a characterization of the $4$-ovoids of $\cQ^-(7, 3)$.

\begin{remark}\label{rm:1}
In \textup{\cite{K}} it is shown that the so-called {\em field reduction} allows one to construct:
\begin{compactitem}
   \item an $\left(m\frac{q^e - 1}{q - 1}\right)$-ovoid of $\cQ^-(2e(r+1) - 1, q)$ from an $m$-ovoid of $\cQ^-(2r+1, q^e)$,
    \item an $\left(m\frac{q^{2e} - 1}{q - 1}\right)$-ovoid of $\cQ^-(2e(2r+1) - 1, q)$ from an $m$-ovoid of a Hermitian variety $\cH(2r, q^{2e})$.
\end{compactitem}

However, apart from a $\left(\frac{q^{(4r+2)/3} - 1}{q^2-1}\right)$-ovoid of $\cH(2r, q^2)$, $r\equiv 1\mod{3}$, see \textup{\cite[Corollary 7.39]{HT}}, 
which in turn is obtained by the field reduction
from a 1-ovoid of $\cH(2,q^{(4r+2)/3})$, 
no non-trivial $m$-ovoids of $\cH(2r, q^2)$ are known to exist. Thus, the only examples of non-trivial $m$-ovoids of elliptic quadrics of rank at least $3$ arise by applying the field reduction to all points of $\cQ^-(2r+1, q^e)$, $r\geq 1, e\geq 2$, or to a hemisystem of $\cQ^-(5, q^e)$, $e\geq 2$, or to all points of $\cH(2r,q^{2e})$, $r\geq $, $e\geq 2$.
\end{remark}

\section{Preliminary results}\label{sect:prelim}

In this section, we prepare technical results needed for the proof of Theorem \ref{th:1}. First, we recall some basic properties of elliptic polar spaces; further details can be found in \cite{pps, HT}. 

Let us consider an elliptic polar space (quadric) $\cQ_r :=\cQ^-(2r+1,q)$ of rank $r\geq 1$, formed by the set of projective points of $\PG(2r+1,q)$ satisfying $f({\bf x})=0$, where
\begin{equation*}
f({\bf x}):=f(x_0,\ldots,x_{2r+1})=x_0x_1 + \cdots + x_{2r-2}x_{2r-1} +g(x_{2r},x_{2r+1}),\quad {\bf x} \in \mathbb{F}_q^{2r+2},
\end{equation*}
and $g$ is a homogeneous irreducible polynomial of degree 2 over $\mathbb{F}_q$. The number of points in $\cQ_r$ is
\begin{equation*}
k_r := \frac{(q^r - 1)(q^{r+1} + 1)}{q - 1}.
\end{equation*}

The associated bilinear form $B({\bf x},{\bf y}):=f({\bf x}+{\bf y})-f({\bf x})-f({\bf y})$ defines the {\bf polarity} $\perp$ of $\PG(2r+1,q)$. 
Two points $X, Y$ of $\PG(2r+1, q)$ represented by  vectors ${\bf x}, {\bf y}$ are said to be {\bf orthogonal} if $B({\bf x},{\bf y})=0$. Moreover, two orthogonal points $X, Y\in \cQ_r$ either coincide, $X=Y$, or are {\bf collinear}, which means that the projective line joining $X,Y$ is entirely contained in $\cQ_r$. 

For a point $P$, denote by $P^{\perp}$ the set of points of $\PG(2r+1, q)$ orthogonal with $P$; such a set is a hyperplane of $\PG(2r+1, q)$ that is either {\bf tangent} or not according as $P$ belongs to $\cQ_r$ or not. Note that $P\in P^{\perp}$ if and only if $P\in \cQ_r$, and, for a point set (or a subspace) $\Pi$, let $\Pi^{\perp}$ denote $\cap_{P\in \Pi}P^{\perp}$. We use the term ($j$-){\bf space} to denote a ($j$-dimensional) projective subspace of the ambient projective space. Recall that the {\bf quotient} space $\PG(2r+1,q)/\Pi\cong \PG(2r-j,q)$ is a projective space whose points, lines, etc. are the subspaces of $\PG(2r+1,q)$ of dimension $j+1$, $j+2$, etc., containing $\Pi$.

Furthermore, when a point $P$ belongs to $\cQ_r$, $P^{\perp}$ is a tangent hyperplane $\cong\PG(2r,q)$ and $P^{\perp}/P$ is the quotient projective space $\cong\PG(2r-1,q)$. Then the subspaces of $\cQ_r$ of dimension $1$, $2$, etc., containing $P$, induce the {\bf quotient} polar space in $P^{\perp}/P$, which is projectively equivalent to an elliptic polar space $\cQ_{r-1}$ of rank $r- 1$. For the sake of simplicity, we will simply denote this polar space by $\cQ_{r-1}$. In particular, the set $\mathsf{L}(P)$ of lines of $\cQ_r$ through $P$ can be identified with the point set of $\cQ_{r-1}$ induced in $P^{\perp}/P$ by $\cQ_r$.
\smallskip

Tight sets and $m$-ovoids share the property that they exhibit precisely two intersection numbers with respect to tangent hyperplanes (see \cite{BKLP}). Namely, if $\cO$ is an $m$-ovoid of $\cQ_r$, then for every point $P\in \cQ_r$ we have
\begin{equation}\label{eq:ovoid-point}
|P^\perp \cap \cO| = 
\begin{cases}
(m-1)(q^r+1)+1 & \mbox{ if } P \in \cO, \\
m(q^r+1) & \mbox{ if } P \in \cQ_r \setminus \cO.
\end{cases}
\end{equation} 

Let $\mu \colon \PG(2r+1, q) \to \mathbb{Z}$ be a function defined on the points of $\PG(2r+1, q)$ such that $\mu(P) = 0$ if $P \in \PG(2r+1, q) \setminus \cQ_r$. For every subset $X$ of the point set of $\PG(2r+1, q)$, we define $\mu(X)=\sum_{P \in X} \mu(P)$; in particular, $\mu(X)=\mu(X\cap \cQ_r)$. Such a map $\mu$ is said to be a {\bf weighted ($m$-)ovoid} of $\cQ_r$, for some natural $m$, if the following property is satisfied:
\begin{equation}\label{eq:star}
\mu(P^\perp) + q^r \mu(P) = m(q^r+1) \mbox{ for every point } P \in \cQ_r, 
\end{equation}
and it immediately follows from Eq. \eqref{eq:ovoid-point} that the $(0,1)$-characteristic function of an $m$-ovoid of $\cQ_r$ is a weighted $m$-ovoid. 
Part (c) of the next lemma shows that Eq.  \eqref{eq:star} generalizes to arbitrary subspaces of the ambient projective space, 
and this fact will be intensively used in the proof of our main result.

\begin{lemma} \label{basic}
Let $\mu$ be a weighted $m$-ovoid of $\cQ_r$.
\begin{enumerate}
\renewcommand{\labelenumi}{\rm(\alph{enumi})}
\item $\mu(\cQ_r) = m(q^{r+1}+1)$. 
\item $\mu(H)=m(q^r+1)$ for every non-tangent hyperplane $H$.
\item $\mu(\Pi^\perp) + q^{r-j} \mu(\Pi) = m(q^{r-j}+1)$ for every $j$-space $\Pi$ of $\PG(2r+1,q)$.
\end{enumerate}
\end{lemma}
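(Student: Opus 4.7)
The plan is to prove (a), (b), (c) in order, each via a switch-of-summation (double-counting) argument.

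For (a), I would sum Eq. \eqref{eq:star} over $P \in \cQ_r$. The contribution $\sum_{P \in \cQ_r}\mu(P^\perp)$ can be rewritten, by swapping the order of summation, as $\sum_{Q \in \cQ_r}\mu(Q)\,|Q^\perp \cap \cQ_r|$. For $Q \in \cQ_r$ the quantity $|Q^\perp \cap \cQ_r|$ equals $1 + qk_{r-1}$ (namely $Q$ itself, plus the $q$ further points on each of the $k_{r-1}$ singular lines of $\cQ_r$ through $Q$), and combining this with the identity $1 + qk_{r-1} + q^r = (q^{2r}-1)/(q-1)$ yields $\mu(\cQ_r) = m(q^{r+1}+1)$.

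Part (b) is the main obstacle, since summing Eq. \eqref{eq:star} over a natural set like $\cQ_r \cap H$ only pins down the average of the values $\mu(P^\perp)$ and would introduce further unknown intersection numbers. Instead I would use a second-moment (variance-zero) argument. Swapping summation gives
\[\sum_{P \in \PG(2r+1,q)}\mu(P^\perp) = \mu(\cQ_r)\cdot\frac{q^{2r+1}-1}{q-1}\]
and
\[\sum_{P \in \PG(2r+1,q)}\mu(P^\perp)^2 = q^{2r}\|\mu\|^2 + \frac{q^{2r}-1}{q-1}\,\mu(\cQ_r)^2,\]
where $\|\mu\|^2 := \sum_{Q\in\cQ_r} \mu(Q)^2$; the second identity uses $|Q^\perp \cap (Q')^\perp| = |\langle Q, Q'\rangle^\perp|$, which equals $(q^{2r+1}-1)/(q-1)$ when $Q = Q'$ and $(q^{2r}-1)/(q-1)$ when $Q \ne Q'$. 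Subtracting from these the analogous sums over $P \in \cQ_r$, which by Eq. \eqref{eq:star} are explicit polynomials in $m$, $q$, $r$, and $\|\mu\|^2$, the contribution of $\|\mu\|^2$ cancels in the difference. After substituting the value of $\mu(\cQ_r)$ from (a), a direct algebraic simplification yields
\[\sum_{P \in \PG(2r+1,q) \setminus \cQ_r}\bigl(\mu(P^\perp) - m(q^r+1)\bigr)^2 = 0;\]
since every summand is non-negative, $\mu(P^\perp) = m(q^r+1)$ for each $P \notin \cQ_r$, which is (b).

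For (c), parts (a) and (b) together show that $\mu(P^\perp) + q^r\mu(P) = m(q^r+1)$ holds for \emph{every} $P \in \PG(2r+1,q)$ (when $P \notin \cQ_r$ we have $\mu(P) = 0$ and $\mu(P^\perp) = m(q^r+1)$ by (b)). Summing this extended identity over the $(q^{j+1}-1)/(q-1)$ points of a $j$-space $\Pi$, and using
\[|\Pi \cap Q^\perp| = \begin{cases} (q^{j+1}-1)/(q-1) & \text{if } Q \in \Pi^\perp, \\ (q^j-1)/(q-1) & \text{if } Q \notin \Pi^\perp, \end{cases}\]
to swap summation once more, one obtains $\sum_{P \in \Pi}\mu(P^\perp) = q^j\mu(\Pi^\perp) + \frac{q^j-1}{q-1}\,\mu(\cQ_r)$. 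Substituting $\mu(\cQ_r) = m(q^{r+1}+1)$ and dividing by $q^j$ gives the identity of part (c) after a brief simplification.
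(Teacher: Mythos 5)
Your proof is correct, and parts (b) and (c) take a genuinely different route from the paper. Part (a) is essentially the paper's argument (same double count, same identity $1+qk_{r-1}+q^r=\frac{q^{2r}-1}{q-1}$). For part (b), the paper fixes the non-tangent hyperplane $H$ and double counts $\mu(P_2)$ over incident pairs with $P_1\in H\cap\cQ_r$, which requires the two quadric-theoretic intersection numbers $|H\cap\cQ_r\cap P_2^\perp|\in\{k_{r-1},\frac{q^{2r-1}-1}{q-1}\}$ and produces a linear equation for $\mu(H)$ one hyperplane at a time; your second-moment argument handles all non-tangent hyperplanes simultaneously and needs only the purely projective counts $|Q^\perp|$ and $|Q^\perp\cap (Q')^\perp|$, at the cost of a longer algebraic simplification. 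I checked that simplification: with $\mu(\cQ_r)=m(q^{r+1}+1)$ one gets $\sum_{P\notin\cQ_r}\mu(P^\perp)=mq^r(q^r+1)(q^{r+1}+1)$ and $\sum_{P\notin\cQ_r}\mu(P^\perp)^2=m^2q^r(q^r+1)^2(q^{r+1}+1)$, and since there are exactly $q^r(q^{r+1}+1)$ points off $\cQ_r$, the variance is indeed $0$, so the conclusion holds. The one step you should state explicitly is that the non-tangent hyperplanes are precisely the $P^\perp$ with $P\notin\cQ_r$ (the polarity is a bijection from points to hyperplanes carrying $\cQ_r$ onto the tangent hyperplanes), so that ``variance zero over $P\notin\cQ_r$'' really covers every non-tangent $H$. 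For part (c), your computation is the dual of the paper's: you sum the extended pointwise identity over the $\frac{q^{j+1}-1}{q-1}$ points of $\Pi$, whereas the paper sums $\mu(R^\perp)$ over $R\in\Pi^\perp$; both are correct and of comparable length, and both rely on (a) and (b) in the same way.
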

\begin{proof}
(a) Computing in two ways the sum of $\mu(P_2)$ over all pairs $(P_1,P_2)$, where $P_1, P_2$ are (not necessarily distinct) points of $\cQ_r$ and $P_2 \in P_1^\perp$, we obtain
\begin{eqnarray*}
\sum_{P_1 \in \cQ_r} \mu(P_1^\perp)&=&
\sum_{P_2 \in \cQ_r} \mu(P_2) (1+q k_{r-1})\\
&=& \mu(\cQ_r)(1+q k_{r-1}).
\end{eqnarray*}
By Eq. \eqref{eq:star}, the left-hand side equals $m(q^r+1) k_r - q^r \sum_{P_1 \in \cQ_r} \mu(P_1)=m(q^r+1) k_r - q^r \mu(\cQ_r)$. Simplifying and using that $(q^r + q k_{r-1} + 1)(q^{r+1}+1) = k_r (q^{r}+1)$, we get the result.

(b) Let $H$ be a non-tangent hyperplane of $\PG(2r+1, q)$. Computing in two ways the sum of $\mu(P_2)$ over all pairs $(P_1,P_2)$, where $P_1 \in H \cap \cQ_r$, $P_2 \in \cQ_r$ and $P_2 \in P_1^\perp$, we obtain
\begin{align*}
\sum_{P_1 \in H \cap \cQ_r} \mu(P_1^\perp)=
\sum_{P_2 \in \cQ_r} \mu(P_2) |H \cap \cQ_r \cap P_2^\perp|.
\end{align*}
By Eq. \eqref{eq:star}, the left-hand side equals $|H \cap \cQ_r| m(q^r+1)-\mu(H)q^r$. As $|H \cap \cQ_r| = \frac{q^{2r}-1}{q-1}$ and $|H \cap \cQ_r \cap P_2^\perp|$ equals $k_{r-1}$ if $P_2 \in \cQ_r \setminus H$ or $\frac{q^{2r-1}-1}{q-1}$ if $P_2 \in H \cap \cQ_r$, we have that
\begin{align*}
\left( q^r + \frac{q^{2r-1}-1}{q-1} - k_{r-1} \right) \mu(H) = m \left( (q^r+1) \frac{q^{2r}-1}{q-1} - (q^{r+1}+1) k_{r-1} \right),
\end{align*}
whence the result follows.

(c) Let $\Pi$ be a $j$-space of $\PG(2r+1, q)$. Consider the $\frac{q^{2r-j+1}-1}{q-1}$ hyperplanes $R^\perp$ with $R \in \Pi^\perp$. Every point of $\Pi$ lies in all these hyperplanes and every other point lies in $\frac{q^{2r-j}-1}{q-1}$ of these hyperplanes. Computing in two ways the sum of $\mu(P)$ over all pairs $(R^\perp, P)$, where $R \in \Pi^\perp$, $P \in R^\perp\cap \cQ_r$, shows that
\begin{align*}
\sum_{R \in \Pi^\perp} \mu(R^\perp) = \frac{q^{2r-j+1}-1}{q-1} \mu(\Pi) + \frac{q^{2r-j}-1}{q-1} \mu( \cQ_r \setminus \Pi).
\end{align*}
From (b) and the fact that $\mu$ is a weighted $m$-ovoid, the left-hand side is equal to 
\begin{align*}
m(q^r+1) |\Pi^\perp| - q^r \sum_{R \in \Pi^\perp \cap \cQ_r} \mu(R) = m(q^r+1) \frac{q^{2r-j+1}-1}{q-1} - q^r \mu(\Pi^\perp). 
\end{align*}
On the other hand, the right-hand side is equal to 
\begin{align*}
q^{2r-j} \mu(\Pi) + \frac{q^{2r-j}-1}{q-1} \mu(\cQ_r). 
\end{align*}
By (a), the assertion follows.
\end{proof}


Note that if $\cO$ is an $m$-ovoid of $\cQ_r$ and $\Pi_j$ is a $j$-space of $\PG(2r+1,q)$,
then it immediately follows from Lemma \ref{basic}(c) that 
\[ 
|\Pi_j^\perp \cap \cO| + q^{r-j}|\Pi_j \cap \cO|= m (q^{r-j}+1),\]
and this result is a counterpart of \cite[Lemma~2.1]{BM}.
\smallskip

For a point $P_0\in \cQ_r$, consider the quadric $\cQ_{r-1}$ induced in the projective space   $P_0^{\perp}/P_0\cong\PG(2r-1,q)$ by $\cQ_r$. Given a weighted ovoid $\mu$ of $\cQ_r$, define a function $\mu^{\downarrow}_{P_0} \colon \PG(2r-1,q) \to \mathbb{Z}$ by 
\begin{equation}\label{eq:hatmu}
\mu^{\downarrow}_{P_0}(\ell) = 
\begin{cases}
0 & \mbox{ if } \ell \notin \mL(P_0), \\ 
\sum_{P \in \ell \setminus \{P_0\}} \mu(P) = \mu(\ell) - \mu(P_0) & \mbox{ if } \ell\in \mathsf{L}(P_0), 
\end{cases}
\end{equation}
where $\ell$ is a projective line of $\PG(2r+1,q)$ passing through $P_0$.

\begin{lemma}\label{lemma-weight}
Let $P_0$ be a point of $\cQ_r$ and let $\mu$ be a weighted $m$-ovoid of $\cQ_r$. Then $\mu^{\downarrow}_{P_0}$ is a weighted $(m - \mu(P_0))$-ovoid of $\cQ_{r-1}$.
\end{lemma}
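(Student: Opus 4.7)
The plan is to verify the defining identity
$$\mu^{\downarrow}_{P_0}(\bar{P}^\perp) + q^{r-1}\mu^{\downarrow}_{P_0}(\bar{P}) = (m-\mu(P_0))(q^{r-1}+1)$$
for each point $\bar{P} \in \cQ_{r-1}$, by applying Lemma~\ref{basic}(c) to the line $\ell \in \mathsf{L}(P_0)$ that represents $\bar{P}$ in the quotient $P_0^{\perp}/P_0$. The off-support condition, i.e.\ $\mu^{\downarrow}_{P_0}(\bar{P}) = 0$ for $\bar{P}$ in the quotient projective space but not on $\cQ_{r-1}$, is built into the definition \eqref{eq:hatmu}, since such a $\bar{P}$ corresponds to a line through $P_0$ that is not in $\mathsf{L}(P_0)$.

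The core calculation starts by applying Lemma~\ref{basic}(c) with $j=1$ and $\Pi = \ell$, giving
$$\mu(\ell^{\perp}) + q^{r-1}\mu(\ell) = m(q^{r-1} + 1).$$
The term $\mu(\ell)$ unpacks immediately as $\mu(P_0) + \mu^{\downarrow}_{P_0}(\bar{P})$ by \eqref{eq:hatmu}. The work is to establish the parallel identity $\mu(\ell^{\perp}) = \mu(P_0) + \mu^{\downarrow}_{P_0}(\bar{P}^\perp)$ for the perpendicular term.

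For this I would use that $\ell$ is totally singular, so $P_0 \in \ell \subseteq \ell^{\perp} \subseteq P_0^{\perp}$. Every point $Q \in \cQ_r \cap \ell^{\perp}$ distinct from $P_0$ is then collinear with $P_0$ on $\cQ_r$, so the line $\langle P_0, Q\rangle$ lies in $\mathsf{L}(P_0)$ and, being generated by two points of the subspace $\ell^{\perp}$, is contained in $\ell^{\perp}$. Conversely, a line $\ell' \in \mathsf{L}(P_0)$ sits in $\ell^{\perp}$ precisely when its image $\bar{Q}$ in $P_0^{\perp}/P_0$ lies in the hyperplane $\bar{P}^\perp = \ell^{\perp}/P_0$ of the quotient space. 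Partitioning $\cQ_r \cap \ell^{\perp}$ into $\{P_0\}$ together with the punctured lines $\ell'\setminus\{P_0\}$ running over such $\ell'$ therefore gives
$$\mu(\ell^{\perp}) = \mu(P_0) + \sum_{\ell' \in \mathsf{L}(P_0),\,\ell' \subseteq \ell^{\perp}}\bigl(\mu(\ell')-\mu(P_0)\bigr) = \mu(P_0) + \mu^{\downarrow}_{P_0}(\bar{P}^\perp).$$
Substituting both decompositions into the Lemma~\ref{basic}(c) identity and cancelling the contribution $(q^{r-1}+1)\mu(P_0)$ yields the claim.

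The main (and really only) obstacle here is the geometric bookkeeping: correctly identifying $\bar{P}^\perp$ taken inside the quotient polar space with $\ell^{\perp}/P_0$ in the ambient setting, and verifying that the partition of $\cQ_r \cap \ell^{\perp}\setminus\{P_0\}$ into punctured lines of $\mathsf{L}(P_0)$ lying in $\ell^{\perp}$ is exactly right, with no overcounting. Once that is in place, the defining equality for a weighted $(m-\mu(P_0))$-ovoid of $\cQ_{r-1}$ follows from Lemma~\ref{basic}(c) by a one-line rearrangement.
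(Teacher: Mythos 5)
Your proposal is correct and follows essentially the same route as the paper: apply Lemma~\ref{basic}(c) with $j=1$ to the line $\ell$ representing $\bar P$, identify $\mu(\ell)=\mu(P_0)+\mu^{\downarrow}_{P_0}(\bar P)$ and $\mu(\ell^\perp)=\mu(P_0)+\mu^{\downarrow}_{P_0}(\bar P^\perp)$ via the partition of $\ell^\perp\cap\cQ_r$ into $\{P_0\}$ and the punctured lines of $\mathsf{L}(P_0)$ inside $\ell^\perp$, then rearrange. The geometric bookkeeping you flag as the main obstacle is handled correctly (using $\ell\subseteq\ell^\perp\subseteq P_0^\perp$ and the identification $\bar P^\perp=\ell^\perp/P_0$), and is in fact spelled out in more detail than in the paper's own proof.
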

\begin{proof}
For every $\ell \in \mL(P_0)$, we have that
\begin{align*}
\mu^\d_{P_0} (\ell^\perp) & = \sum_{\substack{\ell_1 \in \mL(P_0), \\ \ell_1 \subset \ell^\perp}} \mu^\d_{P_0}(\ell_1) && \\
& = \mu(\ell^\perp) - \mu(P_0) && \\
& = m(q^{r-1} + 1) - q^{r-1} \mu(\ell) - \mu(P_0) && \mbox{~\big[by Lemma \ref{basic}(c)\big]} \\
& = (m - \mu(P_0)) (q^{r-1} + 1) - q^{r-1} \mu^\d_{P_0}(\ell) && \mbox{~\big[by Eq. \eqref{eq:hatmu}\big]},
\end{align*}
which shows that $\mu^\d_{P_0}$ satisfies Eq.  \eqref{eq:star}; thus, the result follows.
\end{proof}

For a weighted ovoid $\mu$ of $\cQ_r$, let 
$\|\mu\|^2$ denote the squared norm of $\mu$, i.e.
\[
\|\mu\|^2:=\sum_{P\in\PG(2r+1,q)}\mu(P)^2
=\sum_{P\in\cQ_r}\mu(P)^2,
\]
where we omit the notation for $r$, as it should be clear from the context. The next lemma relates $\|\mu\|^2$ and 
$\|\mu^\d_{P_0}\|^2$.


\begin{lemma}\label{lm:mu}
Let $\mu$ be a weighted $m$-ovoid of $\cQ_r$. Then, for any point $P_0 \in \cQ_r$, the following equality holds:
\begin{eqnarray*}
\|\mu\|^2
&=& \mu(P_0)^2+\big(\mu(P_0) + m(q-1)\big)^2 + (q+1)\cdot\sum_{P_1\in P_0^{\perp}\setminus \{P_0\}}\mu(P_1)^2 - 
\|\mu^\d_{P_0}\|^2.
\end{eqnarray*}
\end{lemma}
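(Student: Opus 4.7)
My plan is to compute the bilinear quantity
$$S := \sum_{P \in P_0^{\perp}\cap\cQ_r}\mu(P)\,\mu(P^{\perp})$$
in two different ways and equate the results; the first uses Eq.~\eqref{eq:star} applied to $P$, and the second exploits the symmetry of $\perp$ together with Lemma~\ref{basic}(c) applied to lines through $P_0$.

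First, substituting $\mu(P^{\perp}) = m(q^{r}+1) - q^{r}\mu(P)$ from Eq.~\eqref{eq:star} yields
$$S = m(q^{r}+1)\,\mu(P_0^{\perp}) - q^{r}\mu(P_0)^{2} - q^{r}\sum_{P_1 \in P_0^{\perp}\setminus\{P_0\}}\mu(P_1)^{2}.$$
Second, using $P \in (P')^{\perp} \iff P' \in P^{\perp}$ to swap summation order gives
$$S = \sum_{P' \in \cQ_r}\mu(P')\,\mu\bigl(\{P_0,P'\}^{\perp}\cap\cQ_r\bigr),$$
which I would split according to the three cases $P'=P_0$, $P' \in P_0^{\perp}\cap\cQ_r\setminus\{P_0\}$, and $P' \in \cQ_r\setminus P_0^{\perp}$. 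For $P'\neq P_0$ the set $\{P_0,P'\}^{\perp}$ is the polar of the line $\ell = P_0P'$, so Lemma~\ref{basic}(c) with $j=1$ evaluates $\mu(\ell^{\perp}) = m(q^{r-1}+1) - q^{r-1}\mu(\ell)$; in the second case $\ell \in \mL(P_0)$ and $\mu(\ell) = \mu(P_0) + \mu^{\d}_{P_0}(\ell)$ by Eq.~\eqref{eq:hatmu}, while in the third case $\ell\cap\cQ_r=\{P_0,P'\}$ and $\mu(\ell) = \mu(P_0) + \mu(P')$. The crucial observation in the second case is that
$$\sum_{P' \in P_0^{\perp}\cap\cQ_r\setminus\{P_0\}} \mu(P')\,\mu^{\d}_{P_0}(P_0P') \;=\; \sum_{\ell \in \mL(P_0)} \mu^{\d}_{P_0}(\ell)\sum_{P' \in \ell\setminus\{P_0\}} \mu(P') \;=\; \|\mu^{\d}_{P_0}\|^{2},$$
since each $P' \in P_0^{\perp}\cap\cQ_r\setminus\{P_0\}$ lies on a unique line of $\mL(P_0)$ and the inner sum equals $\mu^{\d}_{P_0}(\ell)$ by definition.

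With these ingredients, together with the first-moment identities $\sum_{P_1 \in P_0^{\perp}\setminus\{P_0\}}\mu(P_1)=(m-\mu(P_0))(q^{r}+1)$ and $\sum_{P_2 \in \cQ_r\setminus P_0^{\perp}}\mu(P_2)=q^{r}(m(q-1)+\mu(P_0))$ — both immediate from Lemma~\ref{basic}(a) and Eq.~\eqref{eq:star} — equating the two expressions for $S$ produces a polynomial identity in $\mu(P_0)$, $m$, $\|\mu^{\d}_{P_0}\|^{2}$, $\sum_{P_1 \in P_0^{\perp}\setminus\{P_0\}}\mu(P_1)^{2}$ and $\sum_{P_2 \in \cQ_r\setminus P_0^{\perp}}\mu(P_2)^{2}$. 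The main obstacle is the bookkeeping in this final step: the coefficient of $m^{2}$ collapses via $(q^{r}+1)^{2}-(q^{r-1}+1)(q^{r+1}+1)=-q^{r-1}(q-1)^{2}$, the coefficient of $m\mu(P_0)$ is $2q^{r-1}(q-1)$, and the coefficient of $\mu(P_0)^{2}$ is $q^{r-1}$, so after dividing through by $q^{r-1}$ the $m,\mu(P_0)$-terms assemble into the perfect square $[\mu(P_0)+m(q-1)]^{2}$. Adding $\mu(P_0)^{2}+\sum_{P_1 \in P_0^{\perp}\setminus\{P_0\}}\mu(P_1)^{2}$ to both sides then recovers $\|\mu\|^{2}$ on the left and produces exactly the stated identity.
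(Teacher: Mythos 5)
Your proposal is correct and is essentially the paper's own proof: the authors likewise double count $\sum \mu(P)\mu(R)$ over orthogonal pairs with one point in $P_0^\perp$, evaluate one side via Eq.~\eqref{eq:star} and the other via Lemma~\ref{basic}(c) applied to the lines through $P_0$, and extract $\|\mu^{\d}_{P_0}\|^2$ from the sum over $\mL(P_0)$ exactly as you do. The only (cosmetic) difference is the choice of index set --- the paper restricts to pairs $(P,R)$ with $P\in P_0^\perp\setminus\{P_0\}$ and $R\notin P_0^\perp$, whereas you sum $\mu(P)\mu(P^\perp)$ over all of $P_0^\perp\cap\cQ_r$ and split into three cases afterwards --- and your final coefficient bookkeeping checks out.
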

\begin{proof}
Let $\cE$ denote the set of pairs $(P,R)$ such that a point $P \in P_0^\perp \setminus \{P_0\}$, a point $R \notin P_0^\perp$, and $P \in R^\perp$. We will count in two ways the following quantity 
\begin{align*}
S = \sum_{(P,R)\in \cE} \mu(P) \mu(R).
\end{align*}

For a fixed point $R \in \cQ_r \setminus P_0^\perp$, applying Lemma \ref{basic}(c) to the line $\langle P_0,R\rangle$ gives \[\sum_{(P,R)\in \cE} \mu(P) = m(q^{r-1} +1) - q^{r-1} \left( \mu(R) + \mu(P_0) \right).\] 
Hence, since $\sum_{R \notin P_0^\perp} \mu(R) = q^r \left(m(q - 1) + \mu(P_0) \right)$ holds by Lemma \ref{basic}, we obtain 
\begin{align}\label{sum_eqn1}
S & = \sum_{R \notin P_0^\perp} \left(m(q^{r-1}+1)-q^{r-1}\mu(R) - q^{r-1} \mu(P_0) \right) \mu(R) \nonumber \\
& = q^{r-1}\left(\left(m(q^{r-1}+1)-q^{r-1}\mu(P_0)\right)\left(mq(q-1)+q\mu(P_0)\right)-\sum_{R \notin P_0^\perp} \mu(R)^2 \right).
\end{align}

On the other hand, for a fixed point $P \in \cQ_r \cap \left(P_0^\perp \setminus \{P_0\}\right)$, the quantity $\sum_{(P,R)\in \cE} \mu(R)$ equals $\mu(P^\perp) - \mu\left( \ell_P^\perp \right)$, where $\ell_P$ denotes the line of $\cQ_r$ joining $P_0$ and $P$. Set \begin{align*}
S_1 = \sum_{P \in P_0^\perp \setminus \{P_0\}} \mu(P) \mu(P^\perp) \; \mbox{ and } \; S_2 = \sum_{P \in P_0^\perp \setminus \{P_0\}} \mu(P) \mu\left(\ell_P^\perp\right). \quad \mbox{Then } S = S_1 - S_2.  
\end{align*}

Since $\mu(P_0^\perp \setminus \{P_0\}) = \left(m - \mu(P_0)\right)(q^r+1)$ holds by Eq. \eqref{eq:star}, we evaluate $S_1$ as follows:
\begin{align*}
S_1 & = \sum_{P \in P_0^\perp \setminus \{P_0\}} \left( m(q^r+1) -q^r \mu(P) \right) \mu(P) \\
& = m(q^r+1)^2 \left(m - \mu(P_0)\right) - q^r\cdot \sum_{P \in P_0^\perp \setminus \{P_0\}} \mu(P)^2.
\end{align*}

To evaluate $S_2$, observe that 
$\mu\left(\ell_P^\perp\right) = m(q^{r-1} + 1) - q^{r-1} \mu(\ell_P)$ by Eq. \eqref{eq:star}. Therefore, 
\begin{align*}
S_2 & = \sum_{P \in P_0^\perp \setminus \{P_0\}} \mu(P)\left( m(q^{r-1} + 1) - q^{r-1} \mu(\ell_P) \right)  \\
& = m(q^{r-1}+1) \left( m - \mu(P_0) \right)(q^r+1) - q^{r-1}\cdot \sum_{P \in P_0^\perp \setminus \{P_0\}} \mu(P) \mu(\ell_P).
\end{align*}
Further, 
\begin{align*}
\sum_{P \in P_0^\perp \setminus \{P_0\}} \mu(P) \mu(\ell_P) & =  \sum_{\ell \in \mL(P_0)} \left( \mu(\ell) - \mu(P_0) \right) \mu(\ell) \\
& = \sum_{\ell \in \mL(P_0)} \left(\mu(P_0) \left( \mu(\ell) - \mu(P_0) \right) + \mu_{P_0}^\d(\ell)^2\right) \\
& = \mu(P_0) \left( \mu\left( P_0^\perp \right)  - \mu(P_0) \right) + \sum_{\ell \in \mL(P_0)}\mu_{P_0}^\d(\ell)^2 \\
& = \mu(P_0) \left(m - \mu(P_0)\right) (q^r+1) + \|\mu_{P_0}^\d\|^2.
\end{align*}
Thus, we finally obtain
\begin{align}
  S_2 & = m(q^{r-1}+1) \left( m - \mu(P_0) \right)(q^r+1) - q^{r-1} \left( \mu(P_0) \left(m - \mu(P_0)\right) (q^r+1) + \|\mu_{P_0}^\d\|^2 \right) \nonumber\\
  \intertext{so}
S &= S_1 - S_2 \nonumber \\
 &=  m (m - \mu(P_0)) q^{r-1} (q^r+1) (q-1) + \mu(P_0) (m - \mu(P_0)) q^{r-1} (q^r+1) \nonumber \\
 & - q^r\cdot \sum_{P \in P_0^\perp \setminus \{P_0\}} \mu(P)^2 + q^{r-1}\cdot \|\mu_{P_0}^\d\|^2.\label{sum_eqn2}
\end{align}

Equating Eqs. \eqref{sum_eqn1} and \eqref{sum_eqn2} and simplifying the result completes the proof of the lemma.
\end{proof}

\begin{corollary}\label{cor:lines_modular}
For $r\ge 1$ and every weighted $m$-ovoid $\mu$ of $\cQ_r$, one has
\begin{equation*}
\|\mu\|^2
\equiv
\begin{cases}
-2qm^2+(q+1)(q^{r}+1)m  & \mbox{ if } r \mbox{ is even }\\
(q^2+1) m^2 & \mbox{ if } r \mbox{ is odd }
\end{cases}
\mod{2(q+1)}.
\end{equation*}
\end{corollary}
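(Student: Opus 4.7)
The plan is to induct on $r\ge 1$. The base case $r=1$ is immediate: the polar space $\cQ_1=\cQ^-(3,q)$ has rank $1$ and no lines, so $P^\perp\cap\cQ_1=\{P\}$ for every $P\in\cQ_1$; thus Eq.~\eqref{eq:star} forces $\mu(P)=m$ uniformly, giving $\|\mu\|^2=m^2(q^2+1)$, which matches the $r$ odd congruence (with equality).

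For the inductive step with $r\ge 2$, I fix any point $P_0\in\cQ_r$, set $a:=\mu(P_0)$, and invoke Lemma~\ref{lm:mu} to write
\[
\|\mu\|^2 = a^2 + \bigl(a+m(q-1)\bigr)^2 + (q+1)T - \|\mu^\d_{P_0}\|^2, \qquad T:=\sum_{P_1\in P_0^\perp\setminus\{P_0\}}\mu(P_1)^2.
\]
By Lemma~\ref{lemma-weight}, $\mu^\d_{P_0}$ is a weighted $(m-a)$-ovoid of $\cQ_{r-1}$, so the inductive hypothesis evaluates $\|\mu^\d_{P_0}\|^2$ modulo $2(q+1)$, with $r$ replaced by $r-1$ (so the parity flips) and $m$ by $m-a$.

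The remaining subtlety is the term $(q+1)T$, which is evidently $0$ modulo $q+1$ but not obviously modulo $2(q+1)$. To close the gap I use $x^2\equiv x\pmod 2$ for every integer $x$, combined with Eq.~\eqref{eq:star}, to reduce
\[
T\equiv \mu(P_0^\perp)-\mu(P_0) = (m-a)(q^r+1) \pmod 2,
\]
which gives $(q+1)T\equiv (q+1)(m-a)(q^r+1)\pmod{2(q+1)}$. Substituting both pieces into Lemma~\ref{lm:mu} and expanding (using the identity $(q-1)^2+2q=q^2+1$), the terms $2(q+1)a^2$ and $2(q+1)am$ vanish modulo $2(q+1)$, leaving an $a$-dependent residue equal to $(q+1)(m-a)\,q^{r-1}(q-1)$ when $r$ is odd, and $-(q+1)a\bigl[(q-1)a+(q^r+1)\bigr]$ when $r$ is even.

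The hard part will be checking that these residues are $\equiv 0\pmod{2(q+1)}$, i.e., that the bracketed factors are even. In the $r$ odd case one uses $2\mid q-1$ for $q$ odd and $2\mid q^{r-1}$ for $q$ even (the latter needing $r\ge 2$, which is available in the inductive step). In the $r$ even case one observes that for $q$ odd both $(q-1)a$ and $(q^r+1)$ are even, while for $q$ even the bracket becomes $a+1\pmod 2$ and $a(a+1)$ is always even. Once these parity checks go through, the surviving $m$-only terms match exactly $(q^2+1)m^2$ (for $r$ odd) or $-2qm^2+(q+1)(q^r+1)m$ (for $r$ even), completing the induction.
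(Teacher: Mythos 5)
Your proof is correct and follows essentially the same route as the paper's: induction on $r$ with base case $r=1$, Lemma~\ref{lm:mu} combined with Lemma~\ref{lemma-weight} for the inductive step, the reduction of the $(q+1)\sum\mu(P)^2$ term modulo $2(q+1)$ via $t^2\equiv t\pmod 2$ and Eq.~\eqref{eq:star}, and final parity checks split on whether $q$ is even or odd. The leftover residues you isolate, $(q+1)(m-a)q^{r-1}(q-1)$ and $-(q+1)a\bigl[(q-1)a+(q^r+1)\bigr]$, agree with what the paper's computation absorbs implicitly.
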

\begin{proof}
In this proof $\equiv$ stands for equivalence modulo $2(q+1)$. We prove the assertion by induction on $r$. For $r=1$, we have $\mu(P)=m$ for each of the $q^2+1$ points of $\cQ_1$ and the claim follows.

Now suppose that $r\ge 2$, let $P_0$ be any point of $\cQ_r$ and put $x:=\mu(P_0)$. For each integer $t$ we have $(q+1)t^2\equiv (q+1)t$ and hence, by Eq. \eqref{eq:star}, 
\begin{align*}
 \sum_{P\in P_0^\perp\setminus\{P_0\}}(q+1)\mu(P)^2
 & \equiv
 \sum_{P\in P_0^\perp\setminus\{P_0\}}(q+1)\mu(P)
   \equiv (q+1)(q^{r}+1)\big(m-x\big).
\end{align*}
Lemma \ref{lm:mu} shows thus
\begin{align*}
\sum_{P\in\cQ_r} \mu(P)^2 & \equiv x^2+\big(x + m(q-1)\big)^2  
+ (q+1)(q^r+1)\big(m-x\big)- \sum_{\ell\in \mathsf{L}(P_0)}\mu^{\downarrow}_{P_0}(\ell)^2.
\end{align*}
Now we apply the induction hypothesis to the quadric $P_0^\perp/P_0$ (with point-set $\mathsf{L}(P_0)$) induced by $\cQ_r$ and the weighted $(m-x)$-ovoid $\mu^{\downarrow}_{P_0}$ of $\cQ_{r-1}$. When $r$ is even, this gives
\begin{eqnarray*}
\sum_{P\in\cQ_r} \mu(P)^2 & \equiv & x^2+\big(x + m(q-1)\big)^2 + (q+1)(q^r+1)\big(m-x\big)- (q^2+1)(m-x)^2
 \\
 &\equiv& (1-q^2)x^2+2xmq(q+1)-x(q+1)(q^r+1)-2qm^2+(q+1)(q^r+1)m
 \\
 &\equiv& -2qm^2+(q+1)(q^r+1)m
\end{eqnarray*}
where we use $(1-q^2)x^2\equiv (1-q^2)x$ in the last step.
When $r$ is odd, we find instead
\begin{eqnarray*}
\sum_{P\in\cQ_r} \mu(P)^2 & \equiv& x^2+\big(x + m(q-1)\big)^2 + (q+1)(q^r+1)\big(m-x\big)\\
 && +2q(m-x)^2-(q+1)(q^{r-1}+1)(m-x)
 \\
 &\equiv&  x^2+\big(x + m(q-1)\big)^2 +2q(m-x)^2 \\
 &\equiv&  2(q+1)x^2-2xm(q+1)+m^2(q^2+1)\equiv m^2(q^2+1)
\end{eqnarray*}
as desired.
\end{proof}

\section{A modular equality for $m$}\label{sect:eq}
In this section, we prove Theorem \ref{th:1}. Let $\cO$ be an $m$-ovoid of $\cQ_r$, $r\geq 2$, and fix a point $P_0\in \cQ_r$. Recall that the $(0,1)$-characteristic function $\chi$ of $\mathcal{O}$ is a weighted $m$-ovoid of $\cQ_r$, and $\chi^{\downarrow}_{P_0}$ is a weighted $(m - \chi(P_0))$-ovoid of $\cQ_{r-1}$ by Lemma \ref{lemma-weight}.


\begin{lemma}\label{lines}
The following holds:
\begin{equation}\label{eq:doublecount}
\|\chi^{\downarrow}_{P_0}\|^2
= \chi(P_0) + \left( \chi(P_0) + m (q-1) \right)^2 - \chi(P_0) (q+1) (q^r+1) + m (q^r+q).
\end{equation}
\end{lemma}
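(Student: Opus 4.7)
The plan is to derive the stated equality as an essentially immediate consequence of Lemma \ref{lm:mu}, once we exploit the fact that $\chi$ is a $(0,1)$-function.

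First I would apply Lemma \ref{lm:mu} to the weighted $m$-ovoid $\mu = \chi$. This gives
\[
\|\chi\|^2 = \chi(P_0)^2 + \bigl(\chi(P_0) + m(q-1)\bigr)^2 + (q+1)\!\!\sum_{P_1\in P_0^\perp\setminus\{P_0\}}\!\!\chi(P_1)^2 - \|\chi^{\downarrow}_{P_0}\|^2,
\]
so solving for $\|\chi^{\downarrow}_{P_0}\|^2$ reduces the problem to evaluating three quantities on the right.

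The key simplification is that $\chi$ takes values in $\{0,1\}$, so $\chi(P)^2 = \chi(P)$ for every point $P$. In particular $\chi(P_0)^2 = \chi(P_0)$, and $\|\chi\|^2 = \chi(\cQ_r) = m(q^{r+1}+1)$ by Lemma \ref{basic}(a). For the inner sum, the same squaring trick yields
\[
\sum_{P_1\in P_0^\perp\setminus\{P_0\}}\chi(P_1)^2 = \chi(P_0^\perp) - \chi(P_0),
\]
and Eq.~\eqref{eq:star} applied at $P_0$ gives $\chi(P_0^\perp) = m(q^r+1) - q^r\chi(P_0)$, hence this inner sum equals $m(q^r+1) - (q^r+1)\chi(P_0)$.

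Substituting these three evaluations into the identity above and solving for $\|\chi^{\downarrow}_{P_0}\|^2$ gives
\[
\|\chi^{\downarrow}_{P_0}\|^2 = \chi(P_0) + \bigl(\chi(P_0)+m(q-1)\bigr)^2 + (q+1)(q^r+1)\bigl(m-\chi(P_0)\bigr) - m(q^{r+1}+1).
\]
The final step is a routine algebraic simplification of the last two terms: expanding $(q+1)(q^r+1) = q^{r+1}+q^r+q+1$ yields $(q+1)(q^r+1)m - m(q^{r+1}+1) = m(q^r+q)$, which produces exactly the right-hand side of Eq.~\eqref{eq:doublecount}. There is no real obstacle here since all the heavy lifting has already been done in Lemma \ref{lm:mu}; the only thing to be careful about is handling $\chi(P_0^\perp)$ correctly (i.e., remembering that $P_0 \in P_0^\perp$ because $P_0 \in \cQ_r$) when removing the point $P_0$ from the sum.
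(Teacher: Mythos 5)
Your proof is correct and follows exactly the paper's route: the paper's own proof is the one-line observation that Lemma \ref{lm:mu} applied to $\mu=\chi$ together with $\chi(P)^2=\chi(P)$ gives the result, and your write-up simply fills in the same substitutions ($\|\chi\|^2=m(q^{r+1}+1)$, $\chi(P_0^\perp)=m(q^r+1)-q^r\chi(P_0)$) and the final algebraic simplification. Nothing to add.
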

\begin{proof}
The result follows from Lemma \ref{lm:mu} applied to $\chi$ in the role of $\mu$ (observe that $\left(\chi(P)\right)^2=\chi(P)$ for any point $P$).
\end{proof}

The following lemma immediately follows from Corollary \ref{cor:lines_modular} applied to $\chi^{\downarrow}_{P_0}$.

\begin{lemma}\label{lines_modular}
Let $\equiv$ denote equivalence modulo $2(q+1)$.
The following holds:
\begin{equation}\label{eq:modcount}
\|\chi^{\downarrow}_{P_0}\|^2
\equiv 
\begin{cases}
-2q \left(m-\chi(P_0)\right)^2 + (q+1)(q^{r-1} + 1)\left(m-\chi(P_0)\right)  & \mbox{ if } r \mbox{ is odd,}\\ 
(q^2+1) \left(m-\chi(P_0)\right)^2 & \mbox{ if } r \mbox{ is even.}
\end{cases}
\end{equation}
\end{lemma}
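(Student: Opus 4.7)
The plan is essentially a direct substitution, since all the heavy lifting has already been done in Corollary \ref{cor:lines_modular}. The only task is to correctly instantiate that general modular identity for the specific weighted ovoid $\chi^{\downarrow}_{P_0}$.

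First I would invoke Lemma \ref{lemma-weight}, which tells us that $\chi^{\downarrow}_{P_0}$ is a weighted $(m-\chi(P_0))$-ovoid of $\cQ_{r-1}$. This is the crucial structural observation: the object whose squared norm we are computing lives in a polar space of rank \emph{one less} than $\cQ_r$, and its ``parameter'' is $m-\chi(P_0)$ rather than $m$.

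Next I would apply Corollary \ref{cor:lines_modular} to the weighted ovoid $\chi^{\downarrow}_{P_0}$. The corollary's statement is phrased with rank $r$ and parameter $m$; here we must substitute rank $r-1$ and parameter $m-\chi(P_0)$. The case split in the corollary depends on the parity of the rank, so the parity of $r-1$ is what matters: when $r$ is odd we fall into the ``even rank'' case of the corollary, yielding
\[
\|\chi^{\downarrow}_{P_0}\|^2 \equiv -2q(m-\chi(P_0))^2+(q+1)(q^{r-1}+1)(m-\chi(P_0)) \pmod{2(q+1)},
\]
and when $r$ is even we fall into the ``odd rank'' case, yielding
\[
\|\chi^{\downarrow}_{P_0}\|^2 \equiv (q^2+1)(m-\chi(P_0))^2 \pmod{2(q+1)}.
\]
This is exactly Eq.~\eqref{eq:modcount}.

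There is no real obstacle here: the lemma is a corollary in the literal sense, and the only thing to be careful about is the index shift (rank decreases by one, so the parity switches) and the replacement of $m$ by $m-\chi(P_0)$. Since Corollary \ref{cor:lines_modular} is stated uniformly for every weighted $m$-ovoid of $\cQ_r$ with $r\ge 1$, the case $r=2$ (when $\chi^{\downarrow}_{P_0}$ lives in $\cQ_1$) is covered by the base of the induction in that corollary, so no separate argument is needed.
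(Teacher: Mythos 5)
Your proposal is correct and coincides with the paper's own justification: the paper derives Lemma \ref{lines_modular} precisely by applying Corollary \ref{cor:lines_modular} to the weighted $(m-\chi(P_0))$-ovoid $\chi^{\downarrow}_{P_0}$ of $\cQ_{r-1}$ furnished by Lemma \ref{lemma-weight}, with the parity flip from $r$ to $r-1$ accounting for the case swap. Your remark that the $r=2$ case is covered by the base case $r\ge 1$ of the corollary is also accurate.
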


We are now in a position to prove our main result, 
Theorem \ref{th:1}.

\begin{proof}
Fix a point $P_0 \in \cQ_r \setminus \cO$. By Lemmas \ref{lines} and  \ref{lines_modular}, we have two equalities for $\|\chi^{\downarrow}_{P_0}\|^2$.

Suppose that $r$ is odd. 
Equating (modulo $2(q+1)$) Eqs. \eqref{eq:doublecount} and \eqref{eq:modcount} gives 
\[
(q^2+1) m^2 - (q^{r - 1} + 1) m \equiv 0 \mod{2(q+1)}, 
\]
which is equivalent to either $\big(2(m^2 - m) \equiv 0 \mod{2(q+1)}\big)$ or $\big((q+3) (m^2 - m) \equiv 0 \mod{2(q+1)}\big)$, according as $q$ is odd or even, respectively. In the latter case, note that $(q+3) (m^2 - m) \equiv 2(m^2 - m) \mod{2(q+1)}$; hence $2(m^2 - m) \equiv 0 \mod{2(q+1)}$ holds in the even characteristic case as well.

Similarly, if $r$ is even, we obtain 
$$
2 m^2 + (q^{r} + q) m \equiv 0 \mod{2(q+1)}, 
$$
and the result follows.    
\end{proof}

We now determine the number of solutions of Eq. \eqref{eq:main}. 

\begin{lemma}\label{lm:val}
Let $q+1 = p_1^{k_1} \cdot \ldots \cdot p_t^{k_t}$ be the prime factorization of $q+1$. Then the following hold.
\begin{enumerate}
\renewcommand{\labelenumi}{\rm (\alph{enumi})}
\item There are $2^t$ integers $m$, with $0 \le m \le q$, such that $m^2 - m \equiv 0 \pmod{q+1}$.
\item If $q$ is even, there are $p_1^{\lfloor k_1/2 \rfloor} \cdot \ldots \cdot p_t^{\lfloor k_t/2 \rfloor}$ integers $m$, with $0 \le m \le q$, such that $m^2 \equiv 0 \pmod{q+1}$. 
\item If $q \equiv -1 \mod{4}$, then there are $p_1^{\lfloor k_1/2 \rfloor} \cdot \ldots \cdot p_t^{\lfloor k_t/2 \rfloor}$ integers $m$, with $0 \le m \le q$, such that $m^2 + \frac{q+1}{2} m \equiv 0 \pmod{q+1}$. 
\item If $q \equiv 1 \mod{4}$, then there are $2 \cdot p_2^{\lfloor k_2/2 \rfloor} \cdot \ldots \cdot p_t^{\lfloor k_t/2 \rfloor}$ integers $m$, with $0 \le m \le q$, such that $m^2 + \frac{q+1}{2} m \equiv 0 \pmod{q+1}$. 
\end{enumerate}
\end{lemma}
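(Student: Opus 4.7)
The plan is to apply the Chinese Remainder Theorem to the prime factorization $q+1 = p_1^{k_1}\cdots p_t^{k_t}$, reducing the count of admissible $m \in \{0,1,\ldots,q\}$ to the product of the numbers of residue classes modulo each $p_i^{k_i}$ at which $F(m) \equiv 0$. In each of the four parts the per-prime count is an elementary exercise in $p$-adic valuations; the only real subtlety is a borderline case at the prime $2$ in part (c).

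For (a) I would write $F(m)=m(m-1)$; since $\gcd(m,m-1)=1$, for each $i$ at most one of $m,\,m-1$ can be divisible by $p_i$, so $p_i^{k_i}\mid m(m-1)$ holds precisely when $m\equiv 0$ or $m\equiv 1\pmod{p_i^{k_i}}$, yielding two residue classes per prime and $2^t$ admissible $m$ in total. For (b), $q+1$ is odd and all $p_i$ are odd, so $p_i^{k_i}\mid m^2$ is equivalent to $p_i^{\lceil k_i/2\rceil}\mid m$; the admissible integers in $\{0,\ldots,q\}$ are exactly the multiples of $\prod_i p_i^{\lceil k_i/2\rceil}$, of which there are $\prod_i p_i^{\lfloor k_i/2\rfloor}$. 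For (c) and (d), $q$ is odd, so $F(m)=m\bigl(m+(q+1)/2\bigr)$. For every \emph{odd} prime $p_i\mid q+1$ one has $p_i^{k_i}\mid (q+1)/2$, hence $m+(q+1)/2\equiv m\pmod{p_i^{k_i}}$ and the odd-prime analysis collapses to the one in (b).

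It remains to treat $p_1=2$. In case (d), $k_1=1$ and $(q+1)/2$ is odd, so $m$ and $m+(q+1)/2$ have opposite parities and the mod-$2$ condition is automatic, explaining the extra factor of $2$ in the formula. In case (c), $k_1\geq 2$ and $v_2\bigl((q+1)/2\bigr)=k_1-1\geq 1$, and the naive separation of valuations breaks down when $v_2(m)=k_1-1$. I would split on $a:=v_2(m)$ into the three ranges $a<k_1-1$, $a=k_1-1$, and $a\geq k_1$, and in each compute $v_2\bigl(m+(q+1)/2\bigr)$: for $a<k_1-1$ it equals $a$, for $a\geq k_1$ it equals $k_1-1$, and in the borderline case the two odd ``cores'' of $m$ and $(q+1)/2$ sum to an even number, pushing the valuation up by at least $1$. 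A short check across the three ranges shows that $v_2(F(m))\geq k_1$ is equivalent to $a\geq \lceil k_1/2\rceil$, matching the shape of the odd-prime condition. Multiplying the local counts yields the stated formulas.

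The main obstacle I anticipate is precisely this borderline $2$-adic case $a=k_1-1$ in part (c): after ruling it out as producing any extra admissible classes, the rest of the argument is routine CRT bookkeeping.
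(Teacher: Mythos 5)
Your proposal is correct and follows essentially the same route as the paper: reduce via the Chinese Remainder Theorem to counting solutions modulo each $p_i^{k_i}$, then do the elementary local counts, with the only delicate point being the prime $2$ in the case $q\equiv -1\pmod 4$. Your $2$-adic valuation analysis there (showing the solution set modulo $2^{k_1}$ is exactly $\{m: v_2(m)\ge\lceil k_1/2\rceil\}$, of size $2^{\lfloor k_1/2\rfloor}$) checks out and just supplies the detail the paper leaves as an assertion.
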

\begin{proof}
Let $f(m) = m^2 + a m$, for some integer $a$. From the Chinese Remainder Theorem we have that 
\begin{equation} \label{cong}
f(m) \equiv 0 \pmod{q+1}
\end{equation}
has a solution if and only if each of the equations 
\begin{equation} \label{cong_1}
f(m) \equiv 0 \pmod{p_i^{k_i}}, \;\; 1 \le i \le t, 
\end{equation}
has a solution. Moreover, if Eq. \eqref{cong_1} has $n_i$ solutions, then Eq. \eqref{cong} has $n_1 \cdot \ldots \cdot n_t$ solutions. Since $m^2 - m \equiv 0 \pmod{p_i^{k_i}}$ has $2$ solutions and $m^2 \equiv 0 \pmod{p_i^{k_i}}$ admits $p_i^{\lfloor k_i/2 \rfloor}$ solutions, $1 \le i \le t$, statements (a) and (b) follow.

If $q$ is odd, then, assuming that $p_1=2$, one has  $\frac{q+1}{2} \equiv 0 \pmod{p_i^{k_i}}$, $2 \le i \le t$, and $\frac{q+1}{2} \equiv 2^{k_1 - 1} \pmod{2^{k_1}}$ or $\frac{q+1}{2} \equiv 1 \pmod{2}$, according as $q \equiv -1 \pmod{4}$ or $q \equiv 1 \pmod{4}$. The fact that $m^2 + m \equiv 0 \pmod{2}$ has $2$ solutions and that $m^2 + 2^{k_1 - 1} m \equiv 0 \pmod{2^{k_1}}$ admits $2^{\lfloor k_1/2 \rfloor}$ solutions, shows (c) and (d).
\end{proof}

Combining Lemma \ref{lm:val} together with Theorem \ref{th:1}, we get that the number of admissible values of $m$ for an $m$-ovoid of $\cQ_r$ equals:
\begin{eqnarray*}
 2^t \cdot (q^{r-2} + q^{r - 4} + \dots + q) + 1, & \mbox{ if } r \mbox{ is odd, } \\
 p_1^{\lfloor k_1/2 \rfloor} \cdot \ldots \cdot p_t^{\lfloor k_t/2 \rfloor} \cdot (q^{r - 2} + q^{r - 4} + \dots + q^2 + 1), & \mbox{ if } r \mbox{ is even and } q \not\equiv 1 \pmod{4}, \\
 2 \cdot p_2^{\lfloor k_2/2 \rfloor} \cdot \ldots \cdot p_t^{\lfloor k_t/2 \rfloor} \cdot (q^{r - 2} + q^{r - 4} + \dots + q^2 + 1), & \mbox{ if } r \mbox{ is even and } q \equiv 1 \pmod{4}.
\end{eqnarray*}

\section{A lower bound for $m$ and some characterization results}\label{sect:ineq}

In this section we slightly improve on the lower bound $m \ge (\sqrt{4 q^{r+1} + 9} - 3)/(2q-2)$ for an $m$-ovoid of $\cQ^-(2r+1, q)$, which was shown in \cite[Theorem 13]{BKLP}.

In particular, an $m$-ovoid of $\cQ^-(7,q)$ may exist only when $m\ge q+1$. All known examples of $(q+1)$-ovoids of $\cQ^-(7, q)$ arise by applying the field reduction to the points of $\cQ^-(3, q^2)$, see \cite{K}. In this case, the $(q+1)$-ovoid consists of the points of $q^4+1$ pairwise disjoint lines $\ell_1, \dots, \ell_{q^4+1}$ forming a {\em $1$-system}. 
Recall that a {\bf $1$-system} in $\cQ^-(7,q)$ is a set of $q^4+1$ pairwise disjoint lines $\ell_1, \dots, \ell_{q^4+1}$ such that every plane of $\cQ^-(7, q)$ containing $\ell_i$ is disjoint from $\cup_{j = 1, j \ne i}^{q^4+1} \ell_j$. 
The $1$-systems in $\cQ^-(7, q)$ are unique  \cite{LT1, LT2}. Conversely, we will show that a $(q+1)$-ovoid of $\cQ^-(7, q)$, $q\in\{2,3\}$, consists of the points covered by the lines of a $1$-system of $\cQ^-(7, q)$.


\begin{theorem}\label{theorembound}
If $\cQ^-(2r+1,q)$ possesses an $m$-ovoid with $m>0$, then
\[
m\geq \frac{\sqrt{4q^{r+1}+4q+1}-3}{2(q-1)}. 
\]
\end{theorem}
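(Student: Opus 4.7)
The plan is to push the double-counting identity from Lemma \ref{lm:mu} one more step, with the base point chosen \emph{inside} the ovoid. Since $m>0$, the ovoid $\cO$ is non-empty, so pick any $P_0\in\cO$ and let $\chi$ denote the $(0,1)$-characteristic function of $\cO$. By Lemma \ref{lemma-weight}, $\chi^{\downarrow}_{P_0}$ is a weighted $(m-1)$-ovoid of $\cQ_{r-1}$, and by Lemma \ref{basic}(a) its total weight equals $(m-1)(q^r+1)$.

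First, I would evaluate $\|\chi^{\downarrow}_{P_0}\|^2$ in closed form by specializing Lemma \ref{lines} at $\chi(P_0)=1$, obtaining a polynomial expression in $m$, $q$, $r$. Second, I would bound the same quantity from below using integrality: for every $\ell\in \mathsf{L}(P_0)$, the value
\[
v_\ell:=\chi^{\downarrow}_{P_0}(\ell)=|\ell\cap\cO|-1
\]
is a non-negative integer (and in fact lies in $\{0,1,\ldots,q\}$ since $P_0\in\ell\cap\cO$ and $|\ell|=q+1$). Consequently $v_\ell^2\ge v_\ell$ for each $\ell$, and summing over $\mathsf{L}(P_0)$ gives
\[
\|\chi^{\downarrow}_{P_0}\|^2 \;\ge\; \sum_{\ell\in\mathsf{L}(P_0)} v_\ell \;=\; (m-1)(q^r+1).
\]

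Third, I would equate these two estimates. After a routine simplification, the identity collapses to the quadratic inequality
\[
\bigl(m(q-1)+1\bigr)\bigl(m(q-1)+2\bigr) \;\ge\; q(q^r+1),
\]
i.e.\ $(m(q-1))^2+3m(q-1)+2\ge q^{r+1}+q$. Solving for $m(q-1)$ by the quadratic formula and using $m>0$ yields
\[
m(q-1)\;\ge\;\tfrac{1}{2}\bigl(\sqrt{4q^{r+1}+4q+1}-3\bigr),
\]
which is exactly the bound claimed.

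I do not anticipate a serious obstacle: the only substantive step is the algebraic simplification of the two expressions for $\|\chi^{\downarrow}_{P_0}\|^2$, which is mechanical. The gain over the BKLP bound arises from choosing $P_0\in\cO$: the terms $\chi(P_0)=1$ and $(1+m(q-1))^2$ contributed by Lemma \ref{lines}, together with the lower bound $(m-1)(q^r+1)$ (rather than $m(q^r+1)$, which would be obtained from $P_0\notin\cO$), combine to give the additional $+4q-8$ under the square root compared to $+9-9=0$ in the old bound, accounting for the strict improvement when $q>2$.
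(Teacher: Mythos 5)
Your argument is correct and is essentially the paper's own proof: both fix $P_0\in\cO$, compute $\|\chi^{\downarrow}_{P_0}\|^2$ from Lemma \ref{lines} with $\chi(P_0)=1$, lower-bound it by the total weight $(m-1)(q^r+1)$ using $v_\ell^2\ge v_\ell$ for the non-negative integers $v_\ell$ (the paper writes this as $\sum_\ell t_\ell(t_\ell-1)\ge 0$), and solve the resulting quadratic $\bigl(m(q-1)+1\bigr)\bigl(m(q-1)+2\bigr)\ge q^{r+1}+q$. The algebra checks out, so no changes are needed.
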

\begin{proof}
Let $\cO$ be an $m$-ovoid of $\cQ_r$, $\chi$ the characteristic function of $\cO$. Fix a point $P_0 \in \cO$, and for every line $\ell\in \mathsf{L}(P_0)$, define $t_{\ell}:=\chi^{\downarrow}_{P_0}(\ell)=|\ell\cap \cO| - \chi(P_0)$. Then $|P_0^\perp \cap \cO| = m (q^r+1) - q^r$ holds by Eq. \eqref{eq:ovoid-point}, and hence 
\begin{align}
\sum_{\ell \in \mL(P_0)} t_\ell & = |P_0^\perp\cap {\cal O}| -1 = (m-1)(q^r+1). \label{eq:minus1}
\end{align}
Moreover, by Lemma \ref{lines}, we have
\begin{align}
\sum_{\ell \in \mL(P_0)} t_\ell^2 & = 1 + \left(1 + m(q-1)\right)^2 - (q+1)(q^r+1) + m (q^r+q). \label{eq:minus2}
\end{align}
Therefore, subtracting Eq. \eqref{eq:minus1} from Eq. \eqref{eq:minus2}, we obtain
\begin{align}
\sum_{\ell \in \mL(P_0)} t_\ell (t_\ell - 1) & = m^2(q-1)^2 + 3m(q-1) - q^{r+1} - q + 2. \label{eq:minus}
\end{align}
The left-hand side of Eq. \eqref{eq:minus} is non-negative, whereas its right-hand side is a quadratic polynomial in $m$ with positive leading coefficient, whose largest root is 
\[
m_1=\frac{\sqrt{4q^{r+1}+4q+1}-3}{2(q-1)}.
\]
Hence $m\ge m_1$, which completes the proof.
\end{proof}

For $r=3$, we find that $m\ge q+1$. If $m=q+1$ and $P_0\in\cal O$, then Eq. \eqref{eq:minus} reads as
\begin{align}\label{eq:minus3}
\sum_{\ell \in \mL(P_0)} t_\ell (t_\ell - 1) & = q(q-1). 
\end{align}
It is readily seen by Eq. \eqref{eq:minus3} that if a line on $P_0$ is contained in $\cal O$, then every other line of $\cQ^-(7, q)$ through $P_0$ meets $\cal O$ in one or two points. Suppose that this occurs for every point of $\cal O$. Then it turns out that there are exactly $q^4+1$ lines contained in $\cal O$ and that the set $\cal L$ of these lines forms a partition of the $(q+1)$-ovoid. Furthermore, in this case a plane of $\cQ^-(7, q)$ that contains a line of $\cal L$ does not meet any other line of $\cal L$, since it intersects $\cO$ in exactly $q+1$ points. Therefore $\cal L$ is a $1$-system of $\cQ^-(7, q)$. In the next two theorems, we will show that this is the case when 
$q\in\{2,3\}$.

\begin{theorem}
The elliptic quadric $\cQ^-(7, 2)$ has a non-trivial $m$-ovoid only for $m=3$ and $m=4$. Moreover, every $3$-ovoid of $\cQ^-(7,2)$ is the union of the lines of a $1$-system of $\cQ^-(7, 2)$. 
\end{theorem}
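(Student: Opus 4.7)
\emph{Plan for the first part.} The list of admissible $m$ follows from combining Theorem \ref{th:1} with Theorem \ref{theorembound} and complementation. For $r=3$ and $q=2$ the modular condition reads $m^2-m\equiv 0\pmod 3$, leaving $m\in\{0,1,3,4,6,7\}$ in the full range $0\le m\le \frac{q^r-1}{q-1}=7$. The trivial cases are $m=0$ and $m=7$. Theorem \ref{theorembound} gives $m\ge (\sqrt{73}-3)/2>2$, which eliminates $m=1$; and since a $6$-ovoid is the complement of a $1$-ovoid, $m=6$ is excluded as well. Hence $m\in\{3,4\}$.

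\emph{Plan for the second part.} Fix a $3$-ovoid $\cO$ and a point $P_0\in\cO$. Specializing Eq. \eqref{eq:minus3} to $q=2$ yields $\sum_{\ell\in\mL(P_0)}t_\ell(t_\ell-1)=2$. Because every line of $\cQ^-(7,2)$ has only $q+1=3$ points, each $t_\ell=|\ell\cap\cO|-\chi(P_0)$ belongs to $\{0,1,2\}$, and $t_\ell(t_\ell-1)$ equals $0$ when $t_\ell\in\{0,1\}$ and $2$ when $t_\ell=2$. The sum therefore forces that there is exactly one line $\ell_{P_0}\in\mL(P_0)$ with $t_{\ell_{P_0}}=2$, i.e.\ with $\ell_{P_0}\subset\cO$.

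Let $\cL$ be the set of lines of $\cQ^-(7,2)$ contained in $\cO$. The previous step shows that $\cL$ partitions $\cO$, so $|\cL|=|\cO|/3=51/3=17=q^4+1$. To see that $\cL$ is a $1$-system, pick any $\ell\in\cL$ and any plane $\pi$ of $\cQ^-(7,2)$ containing $\ell$: since $\pi$ is a generator, $|\pi\cap\cO|=3=|\ell|$, so $\pi\cap\cO=\ell$. If some $\ell'\in\cL\setminus\{\ell\}$ met $\pi$ at a point $P$, then $P\in\ell'\subset\cO$ together with $\pi\cap\cO=\ell$ would force $P\in\ell\cap\ell'$, putting $P$ on two distinct lines of $\cL$ and contradicting the uniqueness of $\ell_P$ established above. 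Hence every plane through $\ell$ is disjoint from $\bigcup_{\ell'\ne\ell}\ell'$, so $\cL$ is a $1$-system.

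The only conceptual obstacle is recognizing why the argument collapses so cleanly at $q=2$: the weights $t_\ell(t_\ell-1)$ turn into an exact indicator of the event ``$\ell\subset\cO$'', which is precisely what is needed to produce a partition of $\cO$ into lines; everything else is routine bookkeeping using results already proved in the paper. For larger $q$ the same identity allows $t_\ell\in\{1,\dots,q\}$ with various positive contributions, which is what makes the analogous characterization in $\cQ^-(7,3)$ (handled separately in the next result) genuinely more delicate.
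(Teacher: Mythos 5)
Your proposal is correct and follows essentially the same route as the paper: Theorem \ref{th:1} with $q+1=3$ restricts $m$ to $\{1,3,4,6\}$ (non-trivially), Theorem \ref{theorembound} kills $m=1$ and hence $m=6$ by complementation, and Eq. \eqref{eq:minus3} forces exactly one line of $\cO$ through each of its points, yielding the $1$-system. You merely spell out the partition count $|\cL|=51/3=17$ and the plane argument, which the paper relegates to the discussion preceding the theorem.
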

\begin{proof}
For an $m$-ovoid of $\cQ^-(7, 2)$ we have that $3$ divides $m(m-1)$ by Theorem \ref{th:1}. It follows that a non-trivial $m$-ovoid can exist only for $m\in\{1,3,4,6\}$. The case $m = 1$ does not occur by Theorem~\ref{theorembound}. Hence $m \ne 6$, since the complement of a $6$-ovoid is a $1$-ovoid. Let $\cO$ be a $3$-ovoid of $\cQ^-(7, 2)$ and let $P_0 \in \cO$. Note that if $\ell \in \mL(P_0)$, then $t_{\ell} = |\ell \cap \cO| - 1 \in \{0, 1, 2\}$ and $\sum_{\ell \in \mL(P_0)} t_\ell(t_\ell-1) = 2$ by Eq. \eqref{eq:minus3}. Therefore,  every point of $\cO$ lies on exactly one line contained in $\cO$ and hence the $3$-ovoid arises from a $1$-system of $\cQ^-(7, 2)$. (Note that a $4$-ovoid is the complement of a 3-ovoid.)
\end{proof}

\begin{theorem}\label{th:qminus73}
A non-trivial $m$-ovoid of $\cQ^-(7, 3)$ can exist only for $m\in\{4, 5, 8, 9\}$. Moreover, every $4$-ovoid of $\cQ^-(7, 3)$ is the union of the lines of a $1$-system of $\cQ^-(7, 3)$. 
\end{theorem}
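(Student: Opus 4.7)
The plan is to first combine Theorem \ref{th:1} with Theorem \ref{theorembound} to narrow the admissible values of $m$, and then to analyse the local structure of a $4$-ovoid at a point via the quotient-reduction setup of Section \ref{sect:prelim}, ultimately applying Lemma \ref{lm:mu} to a weighted $3$-ovoid of $\cQ^-(5,3)$ to eliminate one of two a priori possible configurations.

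For the first assertion, Theorem \ref{th:1} with $q=3$ and $r=3$ (odd) gives $m(m-1)\equiv 0 \pmod{4}$; together with $0\le m\le (q^r-1)/(q-1)=13$ and the exclusion of the trivial values $m=0,13$, this leaves $m\in\{1,4,5,8,9,12\}$. The bound of Theorem \ref{theorembound} reads $m \ge (\sqrt{337}-3)/4 > 3$, ruling out $m=1$, and hence $m=12$ by complementation. This leaves $m\in\{4,5,8,9\}$.

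For the characterisation of $4$-ovoids, fix $P_0\in\cO$ and consider the weighted $3$-ovoid $t := \chi^{\downarrow}_{P_0}$ of $\cQ^-(5,3)$ provided by Lemma \ref{lemma-weight}. Under the identification $\mathsf{L}(P_0) = \cQ^-(5,3)$, Eq. \eqref{eq:minus3} reads $\sum_{P} t(P)(t(P)-1)=6$. Since $t(P)\in\{0,1,2,3\}$ and $t(t-1)$ takes values $0,0,2,6$ respectively, the distribution of $t$ must fall into exactly one of two cases: either (A) three points carry weight $2$ (and all other weights lie in $\{0,1\}$), or (B) exactly one point carries weight $3$ (and all other weights lie in $\{0,1\}$). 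Using $\sum_P t(P)=3(q^{r+1}+1)=84$ from Lemma \ref{basic}(a), a direct count yields $\|t\|^2=90$ in both cases.

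The main obstacle is to exclude Case A. If two distinct weight-$2$ points $\ell_1, \ell_2$ were orthogonal in $\cQ^-(5,3)$, then the generator through them would carry total weight at least $4$, contradicting the weighted $3$-ovoid property. Hence, in Case A, fixing a weight-$2$ point $\ell_0$, every other point of $\ell_0^\perp\cap\cQ^-(5,3)$ has weight in $\{0,1\}$, so by Eq. \eqref{eq:star}
\[
\sum_{P\in\ell_0^\perp\setminus\{\ell_0\}}t(P)^2 = t(\ell_0^\perp) - t(\ell_0) = 12-2 = 10.
\]
Since $t^{\downarrow}_{\ell_0}$ is a weighted $1$-ovoid of $\cQ^-(3,3) = \cQ_1$ and is therefore the constant function $1$, $\|t^{\downarrow}_{\ell_0}\|^2 = q^2+1 = 10$. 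Plugging into Lemma \ref{lm:mu} applied to $t$ at $\ell_0$ gives $\|t\|^2 = 2^2 + (2+6)^2 + 4\cdot 10 - 10 = 98$, contradicting $\|t\|^2 = 90$. Thus Case B occurs at every $P_0\in\cO$: each point of $\cO$ lies on a unique line of $\cQ^-(7,3)$ entirely contained in $\cO$. These lines form a partition $\cL$ of $\cO$ with $|\cL| = |\cO|/(q+1) = 82 = q^4+1$, and since each plane of $\cQ^-(7,3)$ through a line $\ell\in\cL$ meets $\cO$ in exactly the four points of $\ell$, the family $\cL$ is a $1$-system of $\cQ^-(7,3)$, as required.
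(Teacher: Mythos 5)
Your proof is correct, and while the first assertion and the reduction to the two local configurations at a point $P_0\in\cO$ (one line with $t_\ell=3$ versus three lines with $t_\ell=2$ in $\mathsf{L}(P_0)$) coincide with the paper's argument, your exclusion of the three-line configuration takes a genuinely different route. The paper argues geometrically: it double-counts the lines of $\mathsf{L}(P_0)$ meeting $\cO$ only in $P_0$, obtaining exactly $31$ from Eq.~\eqref{eq:minus1} and at least $3\cdot 20-3\cdot 10+4=34$ by inclusion--exclusion over the perps $\ell_i^\perp$, $\sigma_{i,j}^\perp$ and $\Pi^\perp$ of the three special lines, which requires identifying the quadrics induced on those subspaces. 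You instead stay entirely inside the weighted-ovoid calculus of Section~\ref{sect:prelim}: every generator of the quotient $\cQ^-(5,3)$ carries weight exactly $3$ under $t=\chi^{\downarrow}_{P_0}$ (a plane of $\cQ^-(7,3)$ on $P_0$ meets $\cO$ in $4$ points), so, the weights being non-negative, two weight-$2$ points cannot be collinear; applying Lemma~\ref{lm:mu} one level further down at a weight-$2$ point $\ell_0$ then gives $\|t\|^2=4+64+4\cdot 10-10=98$, against the value $\|t\|^2=84+6=90$ forced by Eqs.~\eqref{eq:minus1} and~\eqref{eq:minus3}. The auxiliary facts you use all check out: $t(\ell_0^\perp\setminus\{\ell_0\})=10$ follows from Eq.~\eqref{eq:star}, and a weighted $1$-ovoid of $\cQ^-(3,3)$ is constant equal to $1$ (exactly the base case of Corollary~\ref{cor:lines_modular}), giving $\|t^{\downarrow}_{\ell_0}\|^2=10$. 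Your route is arguably cleaner in that it recycles the machinery already built for Theorem~\ref{th:1} instead of ad hoc counting in perps, at the cost of slightly less explicit geometric information about the excluded configuration; the concluding step (the $82$ lines of weight $3$ partition $\cO$ and form a $1$-system) agrees with the discussion preceding the theorem in the paper.
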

\begin{proof}
Theorem \ref{th:1} and Theorem \ref{theorembound} imply that a non-trivial $m$-ovoid of $\cQ^-(7,3)$ can exist only for $m\in\{4,5,8,9\}$. Let $\cO$ be a $4$-ovoid of $\cQ^-(7, 3)$ and let $P_0 \in {\cal O}$. If $\ell \in \mL(P_0)$, then $t_{\ell} = |\ell \cap \cO| - 1 \in \{0, 1, 2, 3\}$ and $\sum_{\ell \in \mL(P_0)} t_\ell(t_\ell-1) = 6$ by Eq.  \eqref{eq:minus3}. It follows that two possibilities arise: either there is exactly one line of $\cQ^-(7, 3)$ through $P_0$ contained in $\cO$ and the remaining lines of $\mL(P_0)$ have at most two points in common with $\cO$ or there are exactly three lines of $\cQ^-(7, 3)$ through $P_0$ intersecting $\cO$ in three points and the remaining lines of $\mL(P_0)$ have at most two points in common with $\cO$. If the latter case does not occur, then every point of $\cO$ lies on exactly one line contained in $\cO$ and hence the $4$-ovoid arises from a $1$-system of $\cQ^-(7, 3)$. 

Assume by way of contradiction that the second case occurs for some point $P_0$ of $\cal O$ and let $\ell_1,\ell_2,\ell_3$ be the three lines of $\cQ^-(7, 3)$ on $P_0$ such that $|\ell_i\cap{\cal O}|=3$, i.e., $t_{\ell_i}= 2$. We will evaluate in two different ways the number of lines $\ell$ belonging to $\mL(P_0)$ and such that $t_\ell = 0$, i.e., meeting $\cO$ exactly in $P_0$. Since the point $P_0$ lies on $112$ lines of $\cQ^-(7, 3)$ and $\sum_{\ell \in \mL(P_0)} t_\ell = 84$ by Eq.  \eqref{eq:minus1}, it follows that there are exactly 
\[
112 - (84-6) -3 = 31
\] 
lines of $\mL(P_0)$ meeting $\cO$ exactly in $P_0$.

On the other hand, each of the $10$ planes on $\ell_i$ meets $\cal O$ in $4$ points and hence in each of these planes there are precisely two lines through $P_0$ that meet $\cal O$ only in $P_0$. This shows that $P_0$ lies on $20$ lines of the quadric that lie in $\ell_i^\perp$ and meet $\cal O$ only in $P_0$. Moreover the plane $\sigma_{i, j} = \langle \ell_i,\ell_j\rangle$, $1 \le i < j \le 3$ meets $\cQ^-(7, 3)$ only in $\ell_i \cup \ell_j$ and the 4-space $\sigma_{i, j}^\perp$ meets $\cQ^-(7, 3)$ in a cone having as vertex the point $P_0$  and as base a $\cQ^-(3, 3)$. Since $|\sigma_{i, j} \cap \cO| = 5$, by Lemma \ref{basic}(c) we have that $|\sigma_{i, j}^\perp \cap \cO|  = 1$, and hence $\sigma_{i, j}^\perp$ meets ${\cal O}$ only in $P_0$. Similarly, the solid $\Pi = \langle \ell_1,\ell_2,\ell_3 \rangle$ as well as the solid $\Pi^\perp$ intersect $\cQ^-(7, 3)$ in a quadratic cone having as vertex $P_0$; since $|\Pi \cap \cO| \ge 7$, Lemma~\ref{basic}(c) gives $|\Pi^\perp \cap \cO|  \le 1$, and hence $\Pi^\perp$ meets ${\cal O}$ only in $P_0$. Let $a_I$ denote the number of lines of $\cQ^-(7, 3)$ on $P_0$ that meet $\cal O$ only in $P_0$ and that are contained in $\ell_i^\perp$ for all $i\in I$. Thus, 
\begin{align*}
& a_{\{i\}} = 20 && \mbox{ for } i=1,2,3, \\
& a_{\{i,j\}} = 10 && \mbox{ for } 1 \le i< j \le 3, \\ 
& a_{\{1,2,3\}} = 4. 
\end{align*}
Hence, $P_0$ lies on at least
\[
3\cdot 20-3\cdot 10+4=34
\]
lines of $\cQ^-(7, 3)$ that meet ${\cal O}$ only in $P_0$, a contradiction.
\end{proof}

A natural question arising from Theorem \ref{th:qminus73} concerns the existence of a $5$-ovoid of $\cQ^-(7, 3)$. Note that the complement of a $5$-ovoid is an $8$-ovoid; hence one could ask whether or not it is possible to obtain an $8$-ovoid of $\cQ^-(7, 3)$ by glueing together two disjoint $4$-ovoids of $\cQ^-(7, 3)$. Some computations performed with Magma \cite{magma} show that the sets of points covered by two distinct $1$-systems of a $\cQ^-(7, 3)$ always have in common at least $4$ points. 


\Acknowledgements

\end{document}